\documentclass[11pt]{amsart}
\usepackage{amsfonts,amsmath,amssymb}
\usepackage{natbib}

\setlength{\textwidth}{150mm}
\setlength{\bibhang}{10mm}
\setlength{\oddsidemargin}{0.65cm}
\setlength{\evensidemargin}{0.65cm}

\def\ot{\otimes}

\def\rhu{\hbox{$\rightharpoonup$}}
\def\lhu{\hbox{$\leftharpoonup$}}

\newtheorem{definition}{Definition}
%[section]

%%%%%%%%%%%%%%%%%%%%%%%%%
%%%%%%%%%%%%%%%%%%%%%%%%%%%%%%%%

\newtheorem{proposition}[definition]{Proposition}
\newtheorem{corollary}[definition]{Corollary}

\newtheorem{remarks}[definition]{Remarks}
\newtheorem{theorem}[definition]{Theorem}

\def\bea{\begin{eqnarray*}}
\def\eea{\end{eqnarray*}}
\def\veps{\varepsilon}

\def\ot{\otimes}

\def\a{$\mbox{\u{a}}$}

\def\S{$\mbox{\c{S}}$}

\def\veps{\varepsilon}

\def\bea{\begin{eqnarray*}}
\def\eea{\end{eqnarray*}}

\def\ot{\otimes}

\begin{document}

\title{The bijectivity of the antipode revisited}
\dedicatory{Dedicated to Mia Cohen on the occasion of her retirement}
\author{M.C. Iovanov}
%\thanks{The second author was partially supported by the contract nr. 24/28.09.07 with UEFISCU ``Groups, quantum groups, corings and representation theory" of CNCIS, PN II (ID\_1002)}
\address{University of Southern California, 3620 South Vermont Ave. KAP 108 \\
Los Angeles, CA 90089, USA, and 
\\University of Bucharest, Fac. Matematica \& Informatica,
Str. Academiei nr. 14,
Bucharest 010014,
Romania}
\email{yovanov@gmail.com, iovanov@usc.edu}
%\urladdress{http://yovanov.net}
\author{\c{S}. Raianu}
\address{Mathematics Department, California State University, Dominguez Hills,
1000 E Victoria St, Carson CA 90747, USA}
\email{sraianu@csudh.edu}
\thanks{The first author was partially supported by CNCSIS grant TE no. 45}

%\urladdress{http://www.csudh.edu/math/sraianu}
\begin{abstract} We provide a very short approach to several fundamental results for Hopf algebras with nonzero integrals. Besides being short, our approach is the first to prove the bijectivity of the antipode without using the uniqueness of the integrals of Hopf algebras and to obtain the uniqueness of integrals as a corollary in a way similar to the classical theory of the Haar measure on compact groups.
%Proof of the bijectivity of the antipode of co-Frobenius Hopf algebras without using the uniqueness of integrals.
\end{abstract}
%\date{May 15, 2010}
\maketitle

\section*{Introduction }

 One of the fundamental notions of the theory of Hopf algebras is that of an integral, which is an analog of the Haar integral of a compact group and draws its name from there. %An integral $\lambda$ of a Hopf algebra $H$ is an element in $H^*$ such that $\alpha\lambda=\alpha(1)\lambda$ for all $\alpha\in H^*$. 
More precisely, if $G$ is a compact group and $R(G)$ is the algebra of continuous representative functions on $G$, i.e. the space spanned by the coefficients $\eta_{ij}$ of all continuous representations $\eta:G\rightarrow GL_n(\mathbb{C})$, then the restriction of the Haar integral to $R(G)$ becomes an integral in the Hopf algebra sense (see \citet{abe} or \citet{dnr}). In this respect, a Hopf algebra having a nonzero integral is a generalization of the algebra of continuous representative functions on a compact group. Integrals for Hopf algebras were introduced by 
%Sweedler in 1969 
\citet{sw}. In that paper he proves a series of fundamental results about Hopf algebras with nonzero integrals, including the fact that integrals are unique and the antipode is bijective when the Hopf algebra is finite dimensional. The questions about the validity of these results for Hopf algebras with nonzero integrals of possibly infinite dimension appear explicitly in 
%Sweedler's book 
\citet{sw1}.  These questions were given affirmative answers: the  uniqueness of the integrals was proved by 
%Sullivan in 1971 
\citet{sul}, then, using Sullivan's result, 
%Radford proved in 1977 
\citet{rad} proved that the antipode of a Hopf algebra with nonzero integrals is bijective.
% \cite{rad}. 
Many other proofs for the uniqueness of integrals were found later. Some of these proofs have a strong homological flavor and use the fact that integrals are just comodule maps: \citet{st}, \citet{bdgn}, \citet{mtw}, \citet{dnt}. In contrast with the abundance of proofs for the uniqueness of integrals, Radford's proof for the bijectivity of the antipode was virtually the only one available (with a simplification due to \citet{cal}) until very recently, when  
%It turned out that the existence of an integral is equivalent to various equivalent representation theoretic properties of the Hopf algebra (see \ref{cor}), such that of being co-Frobenius as a coalgebra, or having nonzero rational part. Quite interestingly, once a nonzero integral exists, it is unique and the antipode of the Hopf algebra is bijective, as proved by Radford \cite{rad}. 
alternate proofs were obtained by \citet{i} and (preprint2) by using a purely coalgebraic approach, as a byproduct of a general theory of algebraic ``integrals'' or infinite dimensional generalized Frobenius algebras. All proofs for the bijectivity of the antipode used the uniqueness of integrals, and it was hard to say whether this happened by necessity or it was just an effect of the order in which the two results were obtained. Moreover, the classical proof for the uniqueness of Haar measures adapted for Hopf algebras requires the bijectivity of the antipode (see \citet{vd} and \citet{r}). In the classical case of compact groups, the Hopf algebra of representative continuous functions clearly has a bijective antipode because it is commutative, and this probably made the causative relationship between bijectivity and uniqueness harder to understand. The fact that the antipode of a Hopf algebra with nonzero integrals might not be necessarily bijective was the only obstacle in proving the uniqueness of the integrals by using the same technique as in the case of Haar measures. 

In this note we find a very short approach to explain the above mentioned results. We first prove the bijectivity of the antipode without using the uniqueness of the integrals. This is the first proof constructed in this manner, and it follows by using a technique from \citet{i}. We can then just use the classical proof of the uniqueness of the Haar measure from locally compact groups, as was done in \citet{vd} for multiplier Hopf algebras (see also the chapter on Haar measures in \citet{bour}). Thus, besides being short, this proof also has the advantage that it shows once more an even stronger parallel than noted previously between Hopf algebras and locally compact groups. %used for a proof of the bijectivity of the antipode.

\section*{The Proofs}

Let $H$ be a Hopf algebra over the field $k$. Recall that a left integral $\lambda$ of the Hopf algebra $H$ is an element in $H^*$ such that $\alpha\lambda=\alpha(1)\lambda$ for all $\alpha\in H^*$. We also recall that whenever nonzero left integrals exist, \citet{sw1} proved that the antipode $S$ of the Hopf algebra $H$ is injective, and therefore it has a left inverse $S^l$. Sweedler proved the injectivity of the antipode after twisting by $S$ the module structure in a Hopf module structure on the rational module $Rat(H^*)$. Therefore, it makes sense that when trying to prove the surjectivity one should consider twisting by $S$ the comodule structure in some natural Hopf module structure on the rational part. This is precisely what we are going to do.

For $(M,\rho)\in {\mathcal M}^H$, 
$$\rho:M\longrightarrow M\otimes H,\;\;\;\rho(m)=m_0\otimes m_1,$$ 
we define ${^S\!{M}}\in {^H\!{\mathcal M}}$ with comodule structure given by 
$$m\longmapsto m_{(-1)}\otimes m_{(0)}=S(m_1)\otimes m_0$$ 
It is clear that we have a functor $F:{\mathcal M}^H\longrightarrow {^H\!{\mathcal M}}$, $F(M)={^S\!{M}}$, and $F$ is the identity on morphisms. 

If $x,y\in H$ and $\alpha\in H^*$, we denote $(x\rhu\alpha)(y)=\alpha(yx)$ and $(\alpha\lhu x)(y)=\alpha(xy)$.
Then we have:
\begin{proposition}\label{srathopfmod}
${^S\!{Rat(H^*)}}$, with left $H$-module structure given by
$$H\otimes {^S\!{Rat(H^*)}}\longrightarrow {^S\!{Rat(H^*)}},\;\;\;x\otimes\alpha\longmapsto x\rhu\alpha,\;\;\; x\in H,\;\alpha\in Rat(H^*)$$
and left $H$-comodule structure as above is a left $H$-Hopf module.
\end{proposition}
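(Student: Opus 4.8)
The plan is to check, after recording the rational structure, the single axiom that makes ${^S\!{Rat(H^*)}}$ a left--left Hopf module: that its left $H$-comodule structure map is a morphism of left $H$-modules, where $H\ot{^S\!{Rat(H^*)}}$ carries the tensor-product module structure $x\cdot(h\ot\alpha)=x_1h\ot(x_2\rhu\alpha)$ (here $\Delta(x)=x_1\ot x_2$). Write $\rho(\alpha)=\alpha_0\ot\alpha_1$ for the right $H$-comodule structure of $\alpha\in Rat(H^*)$. By the definition of the rational part, $\gamma*\alpha=\sum\alpha_0\,\gamma(\alpha_1)$ for all $\gamma\in H^*$; evaluating this at $y\in H$ and using $H\hookrightarrow H^{**}$ yields the pointwise identity $\sum y_1\,\alpha(y_2)=\sum\alpha_1\,\alpha_0(y)$ in $H$, valid for every $y$. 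The computational heart of the argument is the identity in $H^*$
$$(\gamma\circ S)*(x\rhu\alpha)\;=\;\sum\gamma\big(x_1\,S(\alpha_1)\big)\,(x_2\rhu\alpha_0)\qquad(x\in H,\ \alpha\in Rat(H^*),\ \gamma\in H^*),$$
which I would prove by evaluating both sides at an arbitrary $z\in H$: the left side is $\sum\gamma(S(z_1))\,\alpha(z_2x)$ straight from the definitions, and the right side, $\sum\gamma(x_1S(\alpha_1))\,\alpha_0(zx_2)$, becomes the same expression after pulling the scalars $\alpha_0(zx_2)$ through $S$ and $\gamma$, applying the pointwise identity with $y$ replaced by $zx_2$, and collapsing $\sum x_1S(x_2)=\veps(x)1$ after one rearrangement by coassociativity.

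From this identity I would extract two consequences. First, replacing $\gamma$ by $\delta\circ S^l$ (legitimate since $S^lS=\mathrm{id}$ forces $(\delta\circ S^l)\circ S=\delta$) gives $\delta*(x\rhu\alpha)=\sum\delta\big(S^l(x_1S(\alpha_1))\big)(x_2\rhu\alpha_0)$ for all $\delta\in H^*$, and since the elements $x_2\rhu\alpha_0$ span a fixed finite-dimensional subspace of $H^*$ independent of $\delta$, this shows $x\rhu\alpha$ is rational. Thus $Rat(H^*)$ is stable under $\rhu$ (a classical fact in any case), the left $H$-module structure in the statement is well defined, and --- together with the already noted fact that $F$ lands in ${^H\!{\mathcal M}}$ --- ${^S\!{Rat(H^*)}}$ is simultaneously a left $H$-module and a left $H$-comodule.

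Second, the compatibility to be verified reads $\sum S\big((x\rhu\alpha)_1\big)\ot(x\rhu\alpha)_0=\sum x_1S(\alpha_1)\ot(x_2\rhu\alpha_0)$. Both sides lie in $H\ot Rat(H^*)$, and since $H\hookrightarrow H^{**}$ an element of $H\ot Rat(H^*)$ is determined by pairing its first tensorand against $H^*$ and evaluating its second at elements of $H$; so it suffices to apply $\gamma\ot\mathrm{ev}_z$ to both sides. The left side then becomes $\big((\gamma\circ S)*(x\rhu\alpha)\big)(z)$ --- using $\sum(x\rhu\alpha)_0\,\eta((x\rhu\alpha)_1)=\eta*(x\rhu\alpha)$ for the rational element $x\rhu\alpha$, with $\eta=\gamma\circ S$ --- and by the displayed identity this equals $\sum\gamma(x_1S(\alpha_1))\,(x_2\rhu\alpha_0)(z)$, which is exactly the image of the right side under $\gamma\ot\mathrm{ev}_z$. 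This proves the compatibility, and with it the proposition.

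The step I expect to be the only real obstacle is the collapse $\sum x_1S(x_2)\ot x_3=1\ot x$ at the end of the displayed identity, and it is precisely there that the twist by $S$ earns its keep: without it, the analogous computation produces a term in which a tensorand coming from $z$ is caught between two tensorands of $x$, so the antipode axiom cannot be invoked and $Rat(H^*)$ with the untwisted right comodule structure is not $\rhu$-compatible in the Hopf module sense. Applying $S$ to the comodule leg slides the two relevant tensorands of $x$ next to each other, making $\sum x_1S(x_2)=\veps(x)1$ available.
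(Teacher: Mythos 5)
Your proof is correct and takes essentially the same route as the paper: your key identity $(\gamma\circ S)*(x\rhu\alpha)=\sum\gamma(x_1S(\alpha_1))\,(x_2\rhu\alpha_0)$ is precisely the computation the paper performs to verify the Hopf module compatibility, and your substitution $\gamma=\delta\circ S^l$ is the same $S^l$-trick the paper uses to show $x\rhu\alpha$ is rational. The only difference is organizational: you derive both rationality and compatibility from a single identity, where the paper runs two parallel computations.
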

\begin{proof} 
The first problem is that it is not obvious why ${^S\!{Rat(H^*)}}$ is a left $H$-module under the $\rhu$ action. To see this, let $\alpha\in Rat(H^*)$, which means that there exist $x^{\alpha}_i\in H$ and  $g^{\alpha}_i\in H^*$ such that for all $\beta\in H^*$ and $h\in H$ we have 
\begin{eqnarray}\label{defrat}
\beta\alpha(h)=\beta(h_1)\alpha(h_2)=\beta(x^{\alpha}_i)g^{\alpha}_i(h)
\end{eqnarray}
Now let $x\in H$, denote as before the left inverse of $S$ by $S^l$, and let us compute
\bea
\beta(x\rhu\alpha)(h)&=&\beta(h_1)(x\rhu\alpha)(h_2)\\
&=&\beta S^l(S(h_1))\alpha(h_2x)\\
&=&\beta S^l(x_1S(x_2)S(h_1))\alpha(h_2x_3)\\
&=&(\beta S^l\lhu x_1)(S((hx_2)_1))\alpha((hx_2)_2)\\
&=&(\beta S^l\lhu x_1)\circ S(x^{\alpha}_i)g^{\alpha}_i(hx_2)\;\;\;\;\;-\;\;{\rm by\,}(\ref{defrat})\\
&=&\beta(S^l(x_1S(x^{\alpha}_i)))(x_2\rhu g^{\alpha}_i)(h)
\eea
Therefore, we proved that $x\rhu\alpha\in Rat(H^*)$.

To finish the proof, we need to show that
$$(x\rhu\alpha)_{(-1)}\otimes(x\rhu\alpha)_{(0)}=x_1\alpha_{(-1)}\otimes x_2\rhu\alpha_{(0)}$$
which is
$$S((x\rhu\alpha)_1)\otimes(x\rhu\alpha)_0=x_1S(\alpha_1)\otimes x_2\rhu\alpha_0$$
or
$$<\beta S((x\rhu\alpha)_1)(x\rhu\alpha)_0,y>=<\beta(x_1S(\alpha_1))(x_2\rhu\alpha_0),y>,\;\;\;\forall\beta\in H^*,\;\; y\in H$$
%(Note: the left $H$-module structure on ${^S\!{Rat(H^*)}}$ can also be obtained as follows: let $H^*_S$ be the right $H^*$-module structure of $H^*$ given by $h^*\cdot\alpha=(\alpha\circ S)*h^*=S^*(\alpha)*h^*$ (*=convolution). Then ${^S\!{Rat(H^*)}}=Rat(H^*_S)$, but I don't think this requires the bijectivity of $S$, here's why: when showing that $Rat(H^*_S)=Rat(H^*)$ we only use the fact that $S$ is injective (i.e. it has a left inverse) to prove the inclusion $Rat(H^*_S)\subseteq Rat(H^*)$, nothing is needed for the other inclusion; once this is done, we only need to describe the left $H$-comodule structure in terms of the right $H$-comodule structure, and there only $S$ is used, and not its inverse.)\\
We have
\bea
<\beta S((x\rhu\alpha)_1)(x\rhu\alpha)_0,y>&=&<(\beta\circ S)*(x\rhu\alpha),y>\;\;\;\;\mbox{(rt $H$-com str of $Rat(H^*))$}\\
&=&\beta S(y_1)(x\rhu\alpha)(y_2)\\
&=&\beta S(y_1)\alpha(y_2x)\\
&=&\beta S(y_1)\alpha(y_2x_2)\veps(x_1)\\
&=&\beta(\veps(x_1)S(y_1))\alpha(y_2x_2)\\
&=&\beta(x_1S(x_2)S(y_1)\alpha(y_2x_3)\\
&=&(\beta\lhu x_1)(S(y_1x_2))\alpha(y_2x_3)\\
&=&<(\beta\lhu x_1)\circ S,(yx_2)_1><\alpha,(yx_2)_2>\\
&=&<((\beta\lhu x_1)\circ S)*\alpha,yx_2>\\
&=&<((\beta\lhu x_1)\circ S)(\alpha_1)\alpha_0,yx_2>\\
&=&\beta(x_1S(\alpha_1))\alpha_0(yx_2)\\
&=&<\beta(x_1S(\alpha_1))(x_2\rhu \alpha_0),y>,
\eea
which ends the proof.
\end{proof}
%Now we can use the following simple observation (there are several ways to proceed from this point):\\
Let $C$ be a coalgebra and $M\in{^C\!{\mathcal M}}$. The coalgebra $C_M$ associated to $M$ is 
%defined as the subspace of $C$ spanned by all the $c_j$'s coming from:
%$$\rho(m)=\sum_{i=1}^n c_i\ot m_i\in C\ot M,\;\; m\in M.$$
%Alternatively, it can be defined as 
the smallest subcoalgebra $C_M$ of $C$ such that $\rho(M)\subseteq C_M\ot M$, i.e. $\displaystyle{C_M=\cap_{A\subseteq C,\rho(M)\subseteq A\ot M}}A$ (see \citet[p. 102]{dnr}). With this notation we have:
\begin{proposition}\label{inclcoalgcomod}
If $M\stackrel{f}{\twoheadrightarrow}N$ is a surjective morphism of left $C$-comodules, then $C_N\subseteq C_M$.
\end{proposition}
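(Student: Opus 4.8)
The plan is to reduce the statement to a single containment of subspaces. By definition $C_N$ is the smallest subcoalgebra $A$ of $C$ with $\rho_N(N)\subseteq A\ot N$, where $\rho_N$ denotes the comodule structure map of $N$; since $C_M$ is itself a subcoalgebra of $C$, it will suffice to verify that $\rho_N(N)\subseteq C_M\ot N$, and then $C_N\subseteq C_M$ is immediate from this minimality.

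To establish that containment I would argue pointwise. Recall that $f$ being a morphism of left $C$-comodules means $\rho_N\circ f=(\mathrm{id}_C\ot f)\circ\rho_M$. Because the base ring is a field, tensoring the inclusion $C_M\hookrightarrow C$ with $M$ (respectively with $N$) is exact, so that $C_M\ot M$ and $C_M\ot N$ are naturally subspaces of $C\ot M$ and $C\ot N$, and $(\mathrm{id}_C\ot f)(C_M\ot M)\subseteq C_M\ot N$. Now, given $n\in N$, surjectivity of $f$ gives some $m\in M$ with $f(m)=n$; since $\rho_M(m)\in C_M\ot M$ by the defining property of $C_M$, we obtain
$$\rho_N(n)=(\mathrm{id}_C\ot f)(\rho_M(m))\in(\mathrm{id}_C\ot f)(C_M\ot M)\subseteq C_M\ot N,$$
hence $\rho_N(N)\subseteq C_M\ot N$, as needed.

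I do not expect a genuine obstacle here: the argument is essentially a diagram chase combined with the universal (minimality) property used to define $C_M$ and $C_N$. The only point that deserves a moment of care is the identification of $C_M\ot M$ with a subspace of $C\ot M$ and the fact that $\mathrm{id}_C\ot f$ carries it into $C_M\ot N$ — both of which rest on working over a field, so that all the relevant tensor functors are exact. Surjectivity of $f$ is used exactly once, to produce the preimage $m$ of an arbitrary element $n\in N$; injectivity plays no role, and the same computation in fact shows the slightly more general statement that $C_{f(M)}\subseteq C_M$ for any morphism $f$ of left $C$-comodules with domain $M$.
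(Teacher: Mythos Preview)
Your proof is correct and follows essentially the same approach as the paper: both arguments reduce to showing $\rho_N(N)\subseteq C_M\ot N$ and then invoke the minimality of $C_N$. The only cosmetic difference is that the paper phrases this via the quotient $M/\mathrm{Ker}(f)\cong N$ (noting that $\mathrm{Ker}(f)$ is a $C_M$-subcomodule so the quotient is a $C_M$-comodule), whereas you argue directly by lifting elements along $f$.
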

\begin{proof} 
%Proof 1: Let $c\in C_N$, $c=c_1$, $n\in N$, $\displaystyle{\rho_N(n)=\sum_{i=1}^kc_i\ot n_i}$ with the $c_i$'s and $n_i$'s linearly independent. Let $m\in M$ such that $f(m)=n$, so 
%$$\displaystyle{\sum_{i=1}^kc_i\ot n_i=\rho_N(f(m))=m_{(-1)}\ot f(m_{(0)})\in C\ot N}.$$ 
%Pick $\{n^*_i\}$ local basis dual to $\{n_i\}$, and get 
%$$$\displaystyle{c=c_1=\sum_{i=1}^kc_i n^*_i(n_i)=m_{(-1)} n^*_i(f(m_{(0)}))\in C_M.}$$
%Since $C_N$ is spanned by elements like $c$ above, we get that $C_N\subseteq C_M$.\\
%Proof 2. (I prefer this one) 
Let $K=Ker(f)$. Then clearly $C_K\subseteq C_M$.
%$\rho_M(K)=\rho_K(K)\subseteq C\ot K$. Also, since $\rho_M(M)\subseteq C_M\ot M$, it follows that $\rho_M(K)\subseteq C_M\ot M$. So $\rho_M(K)\subseteq (C_M\ot M)\cap(C\ot K)$, i.e. $\rho_K(K)=\rho_M(K)\subseteq C_M\ot K$. 
Therefore, $K$ is a $C_M$-subcomodule, so $\rho_{M/K}(M/K)\subseteq C_M\ot M/K$, i.e. $\rho_N(N)\subseteq C_M\ot N$, hence $C_N\subseteq C_M$ by definition.
\end{proof}
%Note that $C_{\bigoplus_{i\in I}M_i}=\sum_{i\in I}C_{M_i}$ (straightforward).
We are now ready to prove
\begin{theorem}\label{antipodebijective}
If $H$ is co-Frobenius, $S$ is bijective.
\end{theorem}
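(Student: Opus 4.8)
The plan is to push the Hopf module ${^S\!{Rat(H^*)}}$ produced in Proposition~\ref{srathopfmod} through the Fundamental Theorem of Hopf Modules and then compare two descriptions of the subcoalgebra it ``generates'' in the sense of the construction preceding Proposition~\ref{inclcoalgcomod}. Since $H$ is co-Frobenius we have $Rat(H^*)\neq 0$ --- a nonzero left integral is already a rational element of $H^*$ --- so ${^S\!{Rat(H^*)}}$ is a \emph{nonzero} left $H$-Hopf module. By the Fundamental Theorem of Hopf Modules (see \citet{dnr}) it is isomorphic, as a left $H$-Hopf module, to $H\otimes V$ for some vector space $V$, where $H\otimes V$ is a left $H$-module through the first factor and a left $H$-comodule via $h\otimes v\mapsto h_1\otimes(h_2\otimes v)$; as ${^S\!{Rat(H^*)}}\neq 0$ we also have $V\neq 0$. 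In particular ${^S\!{Rat(H^*)}}$ and $H\otimes V$ are isomorphic as left $H$-comodules, hence they have the same associated coalgebra, $H_{{^S\!{Rat(H^*)}}}=H_{H\otimes V}$.

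The next step is to pin this coalgebra down from the $H\otimes V$ side. Choosing $\varphi\in V^*$ with $\varphi\neq 0$, the map $H\otimes V\to H$, $h\otimes v\mapsto\varphi(v)h$, is a surjective morphism of left $H$-comodules, so Proposition~\ref{inclcoalgcomod} gives $H_H\subseteq H_{H\otimes V}$; and $H_H=H$ because $(\mathrm{id}\otimes\veps)\circ\Delta=\mathrm{id}$. Since the reverse inclusion is automatic, we conclude $H_{{^S\!{Rat(H^*)}}}=H$.

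Now compute the same coalgebra from the twisted-comodule side. By the very definition of the functor ${^S\!{(-)}}$, the left $H$-coaction on ${^S\!{Rat(H^*)}}$ is $\alpha\mapsto S(\alpha_1)\otimes\alpha_0$, so $\rho({^S\!{Rat(H^*)}})\subseteq S(H)\otimes{^S\!{Rat(H^*)}}$. Because $S$ is an anti-morphism of coalgebras, $\Delta(S(h))=S(h_2)\otimes S(h_1)$ and $\veps\circ S=\veps$, so $S(H)$ is a subcoalgebra of $H$; by the minimality built into the definition of the associated coalgebra, $H_{{^S\!{Rat(H^*)}}}\subseteq S(H)$. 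Comparing with the previous paragraph, $H=H_{{^S\!{Rat(H^*)}}}\subseteq S(H)\subseteq H$, whence $S(H)=H$: the antipode is surjective. Combined with Sweedler's result that $S$ is injective as soon as nonzero integrals exist, this proves that $S$ is bijective.

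I do not expect a genuine obstacle at this stage: the substantive work was already done in Proposition~\ref{srathopfmod} (checking that the $\rhu$-action keeps $Rat(H^*)$ stable and is compatible with the twisted coaction). The only points needing care here are that $S(H)$ really is a subcoalgebra, and the two-directional bookkeeping with ``the smallest subcoalgebra'': one inclusion is obtained by testing the explicit subcoalgebra $S(H)$ against the definition, the other by transporting along the Hopf-module isomorphism and invoking Proposition~\ref{inclcoalgcomod}. Note in particular that, as advertised, the uniqueness of integrals is never used.
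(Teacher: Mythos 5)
Your proof is correct, and it follows the same overall strategy as the paper: twist the right coaction on $Rat(H^*)$ by $S$, use Proposition~\ref{srathopfmod} and the fundamental theorem of Hopf modules to produce a surjection of left $H$-comodules onto ${}^H\!H$, and then squeeze $H=C_H\subseteq C_{{}^S\!Rat(H^*)}\subseteq S(H)$ via the associated coalgebra and Proposition~\ref{inclcoalgcomod}. The one step you handle differently is the upper bound $C_{{}^S\!Rat(H^*)}\subseteq S(H)$: the paper obtains it by invoking Sweedler's right-handed structure theorem $Rat(H^*)\simeq \int_l\otimes H\simeq H^{(\dim\int_l)}$ in ${\mathcal M}^H$, applying the functor $F$ to write ${}^S\!Rat(H^*)\simeq\bigoplus{}^S\!H$, and then computing $C_{{}^S\!H}=S(H)$ exactly (both inclusions, using $S(h)=\veps(h_1)S(h_2)$). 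You instead observe directly that the twisted coaction $\alpha\mapsto S(\alpha_1)\otimes\alpha_0$ lands in $S(H)\otimes{}^S\!Rat(H^*)$ and that $S(H)$ is a subcoalgebra (since $S$ is a coalgebra anti-morphism), so minimality gives the inclusion at once. This is a genuine, if modest, streamlining: you need only the one-sided containment $C_M\subseteq S(H)$ rather than the identification $C_{{}^S\!H}=S(H)$, and you avoid the second application of the Hopf module structure theorem entirely. Everything else --- the nonvanishing of the coinvariants, the surjection $H\otimes V\to H$ via a nonzero $\varphi\in V^*$, the identity $C_H=H$ from the counit, and the appeal to Sweedler's injectivity of $S$ to conclude bijectivity --- matches the paper's argument.
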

\begin{proof}
By Proposition \ref{srathopfmod} and the fundamental theorem of Hopf modules, we have that ${^S\!{Rat(H^*)}}\simeq H\ot(^S\!{Rat(H^*)})^{co}=H\ot\int_l$, since it is easy to see that $(^S\!{Rat(H^*)})^{co}=\int_l$. Also, since $(Rat(H^*))^H\simeq\int_l\ot H=H^{(dim\int_l)}$ in ${\mathcal M}^H$, we get ${^S\!{Rat(H^*)}}\simeq(^S\!H)^{(dim\int_l)}=\bigoplus_{dim\int_l}{^S\!{H}}$, using the fact that the functor $F$ clearly commutes with direct sums. Since $Rat(H^*)\ne 0$ (equivalently $\int_l\ne 0$) we can find a surjection of left $H$-comodules
$$\pi:(^S\!H)^{(dim\int_l)}\simeq {^S\!{Rat(H^*)}}\simeq H\ot(^S\!{Rat(H^*)})^{co}\twoheadrightarrow {^H\!{H}}$$
%Look at $\pi:(^S\!H)^{(dim\int_l)}\twoheadrightarrow {^H\!{H}}$, t
Then $C_H\subseteq\sum C_{{^S\! H}}= C_{{^S\! H}}$ by Proposition \ref{inclcoalgcomod} and the obvious fact that $C_{\bigoplus_{i\in I}M_i}=\sum_{i\in I}C_{M_i}$. Obviously, $C_H=H$ (by the counit property), and also $C_{{^S\! H}}=S(H)$, since $\forall h\in H$, $S(h)=S(h_2)\veps(h_1)\in C_{{^S\! H}}$, because $\rho_{{^S\! H}}(h)=S(h_2)\ot h_1\in H\ot{^S\! H}$. So $H\subseteq S(H)$, and the proof is complete.
\end{proof}
\begin{corollary}\label{sintnz}
If $t\in\int_l$, $t\ne 0$, then $t\circ S\in\int_r$, $t\circ S\ne 0$.
\end{corollary}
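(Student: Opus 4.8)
The plan is to deduce the statement from Theorem \ref{antipodebijective} together with the fact that precomposition with $S$ reverses the convolution product on $H^*$. Concretely, let $\phi\colon H^*\to H^*$ be $\phi(\alpha)=\alpha\circ S$. Two elementary observations are needed. First, since $S$ is an anti-morphism of coalgebras, $\Delta(S(h))=S(h_2)\ot S(h_1)$, one checks directly that $\phi$ is an anti-homomorphism of the convolution algebra $H^*$, i.e. $(\alpha*\beta)\circ S=(\beta\circ S)*(\alpha\circ S)$; second, $(\alpha\circ S)(1)=\alpha(S(1))=\alpha(1)$, so $\phi$ preserves evaluation at $1$.

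Next I would invoke Theorem \ref{antipodebijective}: as $H$ is co-Frobenius, $S$ is bijective, so $\phi$ is a bijection of $H^*$ with inverse $\alpha\mapsto\alpha\circ S^{-1}$. In particular, $\alpha\circ S$ runs over all of $H^*$ as $\alpha$ does.

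The conclusion is then immediate. Since $t\in\int_l$ we have $\alpha* t=\alpha(1)t$ for every $\alpha\in H^*$; applying $\phi$ and using the two observations above gives $(t\circ S)*(\alpha\circ S)=\alpha(1)\,(t\circ S)=(\alpha\circ S)(1)\,(t\circ S)$. Writing $\gamma=\alpha\circ S$ and letting $\gamma$ range over $H^*$ yields $(t\circ S)*\gamma=\gamma(1)(t\circ S)$ for all $\gamma$, i.e. $t\circ S\in\int_r$. For nonvanishing, pick $h$ with $t(h)\ne0$ and, using surjectivity of $S$, write $h=S(h')$; then $(t\circ S)(h')=t(h)\ne0$.

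There is no serious obstacle here; the points to watch are purely bookkeeping: that $\phi$ reverses the order of convolution, so that it genuinely sends left integrals to right integrals rather than back to left integrals, and that passing from ``$(t\circ S)*(\alpha\circ S)$ for all $\alpha$'' to ``$(t\circ S)*\gamma$ for all $\gamma$'' really does use the surjectivity (indeed bijectivity) of $S$ supplied by the Theorem. It is worth remarking that this is precisely the Hopf-algebra version of the classical observation that $f\mapsto(g\mapsto f(g^{-1}))$ turns a left Haar integral into a right Haar integral, in line with the analogy stressed in the introduction.
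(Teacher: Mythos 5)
Your argument is correct and is exactly the verification the paper leaves as ``Obvious'': precomposition with $S$ is a convolution anti-homomorphism of $H^*$ preserving evaluation at $1$, and the bijectivity of $S$ from Theorem \ref{antipodebijective} gives both that $\gamma=\alpha\circ S$ exhausts $H^*$ (so $t\circ S$ is a right integral) and that $t\circ S\ne 0$. No gaps; this matches the paper's intended (unwritten) proof.
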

\begin{proof}
Obvious.
\end{proof}
As a consequence of this proof, the proof for the uniqueness of integrals can be translated verbatim to Hopf algebras from the case of Haar measures, as was done by \citet{vd} for regular multiplier Hopf algebras. This proof could not be used for Hopf algebras because it requires the bijectivity of the antipode, and until now all proofs of the bijectivity of the antipode used the uniqueness of integrals. A modified version of the proof below not requiring the bijectivity of the antipode was given by \citet{r}.
\begin{corollary}
The dimension of $\int_l$ is at most one. 
\end{corollary}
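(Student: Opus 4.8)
The plan is to import, essentially word for word, the classical proof of the uniqueness of the Haar measure on a compact group. This becomes legitimate now that we know, by Theorem~\ref{antipodebijective}, that $S$ is bijective, and by Corollary~\ref{sintnz} that $\varphi\circ S$ is a (nonzero) right integral whenever $\varphi$ is a (nonzero) left integral; since $S^{2}$ is an automorphism of $H$, also $\varphi\circ S^{-1}$ is a right integral. We may assume $\int_l\ne 0$, fix $0\ne\varphi\in\int_l$, and it suffices to prove that an arbitrary $\psi\in\int_l$ is a scalar multiple of $\varphi$.

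First I would record two ``smeared'' invariance identities. Writing the left-invariance of $\varphi$, $\alpha\varphi=\alpha(1)\varphi$, for the functional $\alpha\colon x\mapsto\mu(ax)$ gives, for all $\mu\in H^{*}$ and $a,b\in H$,
\[
\sum\mu(ab_1)\,\varphi(b_2)=\mu(a)\,\varphi(b).
\]
Writing the right-invariance of $\varphi\circ S$ for the same $\alpha$, and then replacing the free argument by $S^{-1}(b)$ (this is where the bijectivity of $S$ enters, and where one uses that the coproduct of $S^{-1}(b)$ is $\sum S^{-1}(b_2)\otimes S^{-1}(b_1)$), gives
\[
\sum\varphi(b_2)\,\mu\bigl(a\,S^{-1}(b_1)\bigr)=\mu(a)\,\varphi(b).
\]
This second identity is precisely the Hopf-algebra substitute for the classical change of variables $y\mapsto yx^{-1}$ -- equivalently, for the fact that on a compact group the left and the right Haar measures coincide -- and it is the only place where the surjectivity of $S$ is used.

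Next I would run the Fubini argument. Taking $\mu=\psi$ in the two identities, and using that $\psi$ is itself left-invariant, one plays $\psi$ off against $\varphi$ on $H\otimes H$; the ``Fubini'' involved is merely that applying $\psi$ to one tensor factor and $\varphi$ to the other is order-independent. Feeding the $S^{-1}$-twisted identity back into $\sum\psi(ab_1)\varphi(b_2)=\psi(a)\varphi(b)$, together with the symmetric identities obtained by interchanging the roles of $\varphi$ and $\psi$, the twisted terms cancel and one is left with an equality of bilinear forms
\[
\psi(a)\,\varphi(b)=c\,\varphi(a)\,\varphi(b)\qquad(a,b\in H)
\]
for one fixed scalar $c$. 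Since $\varphi\ne 0$, pick $b_0$ with $\varphi(b_0)\ne 0$; then $\psi(a)=c\,\varphi(a)$ for every $a$, so $\psi=c\varphi$, and $\dim\int_l\le 1$.

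The step I expect to be the main obstacle is this last cancellation. Classically the scalar $c$ is the ratio $\psi(1)/\varphi(1)$ of total masses, but an integral of a Hopf algebra may well vanish at $1$, so no such normalization is available; the computation must instead be organized so as to deliver the bilinear identity $\psi\otimes\varphi=c\,(\varphi\otimes\varphi)$ directly, which forces one to keep careful track of where each $S$ or $S^{-1}$ occurs and in which order the factors are multiplied. Once the two smeared identities above are in place in the correct form, the remaining manipulation is exactly the computation of \citet{vd}, transcribed from regular multiplier Hopf algebras to the present setting.
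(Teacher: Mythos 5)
Your strategy is the same as the paper's: both transplant the classical uniqueness argument for the Haar measure (following Van Daele), made legitimate by Theorem \ref{antipodebijective} and Corollary \ref{sintnz}. Your two ``smeared'' identities are correct as stated: the first is left invariance of $\varphi$ applied to $\mu\lhu a$, and the second follows from right invariance of $\varphi\circ S$ evaluated at $S^{-1}(b)$, using $\Delta(S^{-1}(b))=S^{-1}(b_2)\otimes S^{-1}(b_1)$; this is indeed the point where surjectivity of $S$ enters and is the correct substitute for the change of variables $y\mapsto yx^{-1}$.

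The gap is exactly the step you flag yourself. No ``cancellation of the twisted terms'' on $H\otimes H$ produces the scalar $c$: substituting one smeared identity into another only ever yields further identities of bilinear forms in $(a,b)$, with no distinguished constant appearing, precisely because (as you observe) $\varphi(1)$ may be $0$ and there is no total mass to normalize by. The device that actually produces $c$ --- and that is absent from your sketch --- is the choice of auxiliary elements $l,m\in H$ with $\lambda(l)=1$ and $\varphi(m)=1$, where $\lambda=\varphi\circ S$; these are the algebraic avatars of the auxiliary test function of nonzero total integral in the classical proof. With them one argues in two stages: first, for each $h\in H$ one produces $g\in H$ (explicitly $g=em_2\,\lambda(S^{-1}(m_1))$ with $e=hl_1\,\psi(S(l_2))$) such that
\[
\psi(xh)=\varphi(xg)\qquad\text{for all }x\in H,
\]
by a chain alternating the left invariance of $\psi$ and $\varphi$ with the right invariance of $\lambda$, inserting $l_2S(l_3)$ and $m_2S^{-1}(m_1)$ at the appropriate places; second, one writes $\psi(y)=\lambda(l)\psi(y)=\lambda(S(y_1))\psi(y_2l)=\lambda(S(y_1))\varphi(y_2g)=\lambda(g)\varphi(y)$. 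The scalar is $c=\lambda(g)$, and it emerges only from this chain evaluated at the chosen $l$ and $m$, not from a formal cancellation between the two invariance identities. So your setup and your diagnosis of the obstacle are both right, but the heart of the proof --- the mechanism that actually manufactures the scalar --- is missing and is not supplied by the cancellation you describe.
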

\begin{proof} (Identical to the proof of \citet[Theorem 3.7]{vd})
Let $t_1,t_2\in\int_l$, $t_2\ne 0$. By Corollary \ref{sintnz} $\lambda=t_2\circ S
\in\int_r\setminus\{0\}$. Then for any $h\in H$ there is a $g\in H$ such that 
\begin{eqnarray}\label{t1t2}
t_1(xh)=t_2(xg)\;\;\;\forall x\in H.
\end{eqnarray}
Indeed, let $l,m\in H$ such that $\lambda(l)=1$ and $t_2(m)=1$. Then 
\bea
t_1(xh)&=&\lambda(l)t_1(xh)\\
&=&\lambda(x_1h_1l)t_1(x_2h_2)\;\;\;(\textstyle{t_1\in\int_l})\\
&=&\lambda(x_1h_1l_1)t_1(x_2h_2l_2S(l_3))\\
&=&\lambda(xhl_1)t_1(S(l_2))\;\;\;(\textstyle{\lambda\in\int_r})\\
&=&\lambda(xe)\;\;\;(e=hl_1t_1(S(l_2))\\
&=&\lambda(xe)t_2(m)\\
&=&\lambda(x_1e_1)t_2(x_2e_2m)\;\;\;(\textstyle{\lambda\in\int_r})\\
&=&\lambda(x_1e_1m_2S^{-1}(m_1))t_2(x_2e_2m_3)\\
&=&\lambda(S^{-1}(m_1))t_2(xem_2)\;\;\;(\textstyle{t_2\in\int_l})\\
&=&t_2(xg)\;\;\;(g=em_2\lambda(S^{-1}(m_1)).\\ 
\eea
We finish the proof by showing that $t_1$ is a scalar multiple of $t_2$. For $y\in H$ we have:
\bea
t_1(y)&=&\lambda(l)t_1(y)\\
&=&\lambda(l_1)t_1(yl_2)\;\;\;(\textstyle{\lambda\in\int_r})\\
&=&\lambda(S(y_1)y_2l_1)t_1(y_3l_2)\\
&=&\lambda(S(y_1))t_1(y_2l)\;\;\;(\textstyle{t_1\in\int_l})\\
&=&\lambda(S(y_1))t_2(y_2g)\;\;\;\;\;-\;\;{\rm by\,}(\ref{t1t2})\\
&=&\lambda(g)t_2(y),
\eea
where the last equality follows from reversing the previous three equalities, and the proof is complete.
\end{proof}
\begin{remarks}
a) Note that aside from the bijectivity of the antipode, the proof above uses only the definition of integrals.\\ 
b) To compensate for not being able to use the inverse of $S$, a special left integral had to be chosen in \citet{r} and it was shown to form a basis of $\int_l$. Once we are able to use the inverse of $S$, the proof above shows that any non-zero left integral will do.
\end{remarks}
Following the work of Lin, Larson, Sweedler, and Sullivan, the existence of a nonzero integral is equivalent to various representation theoretical properties of the Hopf algebra, such as that of being co-Frobenius as a coalgebra, or having nonzero rational part. As a final application, we show how our approach may be used to simplify the proof of some of these results (see \citet[Theorem 5.3.2]{dnr}):
\begin{corollary}\label{cor}
For a Hopf algebra $H$ the following assertions are equivalent:\\
(1) $H$ is left co-Frobenius\\
(2) $H$ is left quasi-co-Frobenius\\
(3) $H$ is left semiperfect\\
(4) $Rat(H^*)\ne 0$ ($H^*$ as a left $H^*$-module)\\
(5) $\int_l\ne 0$\\
(6) The right hand version of (1)-(5)
\end{corollary}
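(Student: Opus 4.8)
The plan is to close the circle of implications among (1)--(6), leaning on Theorem \ref{antipodebijective} to pass between the left and right versions. The implications (1) $\Rightarrow$ (2) $\Rightarrow$ (3) $\Rightarrow$ (4) $\Rightarrow$ (5) are largely standard module-theoretic facts, and I would either cite \citet{dnr} for them or reprove the short ones: a co-Frobenius coalgebra is quasi-co-Frobenius essentially by definition (a split embedding $H \hookrightarrow H^*$ gives a surjection of a free-ish object onto each injective); quasi-co-Frobenius implies semiperfect because $H^*$ is then a direct sum of finite-dimensional injectives, forcing the indecomposable injectives to be finite-dimensional; semiperfect is exactly the statement that $Rat(H^*)$ is dense/large enough, which for coalgebras is equivalent to $Rat(H^*) \ne 0$ being a generator, and in any case $Rat(H^*) \ne 0$; and (4) $\Leftrightarrow$ (5) is the classical identification of the left integrals with the coinvariants of the Hopf module $Rat(H^*)$, which already appears inside the proof of Theorem \ref{antipodebijective} where it is noted that $(^S\!{Rat(H^*)})^{co} = \int_l$.

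The key new input is how the bijectivity of $S$ lets us run the whole equivalence on both sides. First I would establish (5) $\Rightarrow$ (1), the genuinely substantive arrow: from $\int_l \ne 0$ we get, exactly as in the proof of Theorem \ref{antipodebijective}, that $^S\!{Rat(H^*)} \simeq H \otimes \int_l$ as a left $H$-Hopf module, hence $Rat(H^*)$ contains a copy of $^S\!H$, and since $S$ is bijective (Theorem \ref{antipodebijective}) the functor $F$ is an equivalence, so $^S\!H \simeq H$ as coalgebras; this produces an injective $H$-comodule map $H \hookrightarrow Rat(H^*) \subseteq H^*$, which is precisely the co-Frobenius condition. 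Then, having (1)--(5) equivalent, I would invoke Corollary \ref{sintnz}: a nonzero left integral $t$ yields a nonzero right integral $t \circ S$, so $\int_l \ne 0$ if and only if $\int_r \ne 0$, which immediately gives the equivalence of (5) with its right-hand mirror, and transports the whole chain to the right side, yielding (6).

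The main obstacle I expect is making the implication (5) $\Rightarrow$ (1) fully rigorous in the infinite-dimensional setting: the fundamental theorem of Hopf modules gives $^S\!{Rat(H^*)} \simeq H \otimes \int_l$ as \emph{Hopf} modules, but to read off the co-Frobenius property I need an injection $H \hookrightarrow Rat(H^*)$ that is a comodule map (not merely a linear one), and I must check that picking out a single tensorand $H \otimes \{\xi\}$ for $0 \ne \xi \in \int_l$ indeed gives such a split subcomodule; this is where I would be careful, using that $\int_l$ has trivial comodule structure so each such slice is genuinely a subcomodule isomorphic to $^S\!H \simeq H$. The symmetric statements (the right-handed fundamental theorem, the right rational part) require the bijectivity of $S$ precisely to avoid redoing all the constructions from scratch, which is the point of the remark following Corollary \ref{sintnz}; aside from that bookkeeping the argument is routine, and I would keep the write-up to a few lines by citing \citet[Theorem 5.3.2]{dnr} for the equivalences not involving the left/right symmetry and emphasizing only the new short route through Theorem \ref{antipodebijective} and Corollary \ref{sintnz}.
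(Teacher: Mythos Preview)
Your overall strategy matches the paper's: the implications among (1)--(5) are standard (the paper just says ``definitions or Sweedler's isomorphism'' and cites \citet[Theorem 5.3.2]{dnr}), and the only new content is the left/right passage (5)$\Leftrightarrow$(6) via Corollary~\ref{sintnz}.

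One point where you work harder than necessary: for (5)$\Rightarrow$(1) you invoke the twisted Hopf module ${}^S\!Rat(H^*)$ and the bijectivity of $S$ to identify ${}^S\!H$ with $H$. This is roundabout. The \emph{untwisted} right Hopf module structure on $Rat(H^*)$ already gives $Rat(H^*)\simeq \int_l\otimes H$ in $\mathcal{M}^H$ (this is exactly the ``Sweedler isomorphism'' the paper refers to, and it is even used inside the proof of Theorem~\ref{antipodebijective}), so any nonzero $\xi\in\int_l$ yields directly a right $H$-comodule embedding $H\hookrightarrow Rat(H^*)\subseteq H^*$, which is the left co-Frobenius condition. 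No appeal to Theorem~\ref{antipodebijective} is needed for this step; the bijectivity of $S$ enters only in the passage to (6).
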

\begin{proof}
All implications follow directly from the definitions or from Sweedler's isomorphism, with the exception of 5)$\Rightarrow$6) which follows from Corollary \ref{sintnz}.
\end{proof}

%\section{Preliminaries}
\begin{center}
ACKNOWLEDGMENTS
\end{center}
We thank the referee for some valuable suggestions.


\begin{thebibliography}{1}
\bibitem[Abe (1977)]{abe} Abe, E. (1977). {\it Hopf Algebras}, Cambridge Univ. Press.

%\bibitem{bc} D. Bulacu, S. Caenepeel, Integrals for (dual) quasi-Hopf algebras. Applications, J. Algebra {\bf 266} (2003), no. 2, 552-583.

\bibitem[Beattie et al. (1998)]{bdgn} Beattie, M., D\a sc\a lescu, S., Gr\"{u}nenfelder, L., N\a st\a sescu, C. (1998). Finiteness Conditions, Co-Frobenius Hopf Algebras and Quantum Groups, {\it J. Algebra} 200:312-333. 

\bibitem[Bourbaki (1963)]{bour} Bourbaki, N. (1963). {\it\'{E}l\'{e}ments de math\'{e}matique}. Fascicule XXIX. Livre VI: Int\'{e}gration. Chapitre 7: Mesure de Haar. Chapitre 8: Convolution et repr\'{e}sentations. (French) Actualit\'{e}s Scientifiques et Industrielles, No. 1306 Hermann, Paris.

\bibitem[C\a linescu (2001)]{cal} C\a linescu, C. (2001). On the bijectivity of the antipode in a co-Frobenius Hopf algebra, {\it Bull. Math. Soc. Sci. Math. Roumanie (N.S.)} 44(92):59-62.

% Dissertation, University of Bucharest, 1998. 

\bibitem[D\a sc\a lescu et al. (2001)]{dnr} D\a sc\a lescu,  S., N\a st\a sescu, C., Raianu, \S. (2001). {\it Hopf Algebras: an Introduction}, Monographs and Textbooks in Pure and Applied Mathematics 235, Marcel Dekker, Inc., New York.

\bibitem[D\a sc\a lescu et al. (1999)]{dnt} D\a sc\a lescu,  S., N\a st\a sescu, C., Torrecillas, B. (1999). Co-Frobenius Hopf 
Algebras: Integrals, Doi-Koppinen Modules and Injective Objects, 
{\it J. Algebra} 220:542-560.

%\bibitem{D}
%V.G. Drinfel'd, Quasi-Hopf Algebras, Leningrad Math. J. {\bf 1} (1990) 1419-1457.

%\bibitem{GTN}
%J. Gomez-Torrecillas, C. N\u ast\u asescu, Quasi-co-Frobenius coalgebras, J. Algebra {\bf 174} (1995), no. 3, 909-923.

%\bibitem{hn}
%F.Hausser, F. Nill, Integral theory for quasi-Hopf algebras, arXiv:math/9904164v2.

\bibitem[Iovanov (preprint1)]{i} Iovanov, M.C. Generalized Frobenius algebras and the theory of Hopf
algebras, preprint, arXiv:0803.0775.

\bibitem[Iovanov (preprint2)]{i0} Iovanov, M.C. Abstract Integrals in Algebra, preprint, arXiv:0810.3740.
%arXiv:0810.3740.

%\bibitem{M}
%S. Majid, {\it Foundations of Quantum Group Theory}, Cambridge University Press, Cambridge, 1995.

%\bibitem{pvo}
%F. Panaite, F. van Oystaeyen, Existence of integrals for finite dimensional Hopf algebras, Bull. Belg. Math. Soc. Simon Stevin {\bf 7} (2000) 261-264.

\bibitem[Menini et al. (2001)]{mtw} Menini, C., Torrecillas, B,. Wisbauer, R. (2001). Strongly regular comodules and semiperfect Hopf algebras over QF rings, {\it J. Pure Appl. Algebra} 155:237-255.

\bibitem[Radford (1977)]{rad} Radford, D.E. (1977). Finiteness conditions for a Hopf algebra with a nonzero integral, {\it J. Algebra} 46:189-195.

\bibitem[Raianu (2000)]{r} Raianu, \S. (2000). An easy proof of the uniqueness of integrals, in {\it Hopf Algebras and Quantum Groups}, Proceedings of the Brussels Conference 1998; Caenepeel, S., Van Oystaeyen, F., Eds.; Marcel Dekker Lecture Notes in Pure and Appl. Math. vol 209, pp:237-240.

\bibitem[\S tefan (1995)]{st} \S tefan, D. (1995). The uniqueness of integrals. A homological approach, {\it Comm. Algebra} 23:1657-1662.

\bibitem[Sullivan (1971)]{sul} Sullivan, J.B. (1971). The Uniqueness of Integrals for Hopf Algebras and Some Existence Theorems of Integrals for Commutative Hopf Algebras, {\it J. Algebra} 19:426-440.

\bibitem[Sweedler (1969a)]{sw} Sweedler, M.E. (1069a). Integrals for Hopf algebras, {\it Ann. of Math.} 89:323-335.

\bibitem[Sweedler (1969b)]{sw1} Sweedler, M.E. (1969b). {\it Hopf Algebras}, Benjamin, New York.

\bibitem[Van Daele (1998)]{vd} Van Daele, (1998). An algebraic framework for group duality, {\it Adv. Math.} 140:323-366.
\end{thebibliography}
\end{document}